\documentclass{amsart}

\usepackage{upgreek}
\usepackage{mathrsfs}
\usepackage{amsmath}
\usepackage{booktabs}
\usepackage{amsthm}
\usepackage{amssymb}
\usepackage{amsfonts}
\usepackage{xcolor}
\usepackage{enumerate}
\usepackage{CJK}
\usepackage{bbm}
\usepackage{tikz}
\usepackage{tikz-cd}
\usetikzlibrary{matrix,arrows,decorations.pathmorphing}

\begin{CJK}{UTF8}{}
\gdef\yama{\mbox{\textbf{山}}}

\gdef\ten{\mbox{\textbf{天}}}
\end{CJK}

\newcommand{\A}{\mathscr{A}}
\renewcommand{\P}{\mathbb{P}}

\newcommand{\F}{\mathbb{F}}

\newcommand{\Q}{\mathbb{Q}}

\newcommand{\bmu}{\mathbf{\upmu}}
\newcommand{\Z}{\mathbb{Z}}

\DeclareMathOperator{\Aut}{Aut}
\DeclareMathOperator{\Out}{Out}
\DeclareMathOperator{\Inn}{Inn}

\DeclareMathOperator{\ord}{ord}
\DeclareMathOperator{\Gal}{Gal}
\DeclareMathOperator{\Ind}{Ind}

\DeclareMathOperator{\Res}{Res}

\newtheorem{theorem}{Theorem}
\newtheorem*{main-thm}{Main Theorem}
\newtheorem{lemma}[theorem]{Lemma}
\newtheorem{proposition}[theorem]{Proposition}
\newtheorem{corollary}[theorem]{Corollary}

\theoremstyle{definition}

\numberwithin{equation}{section}
\numberwithin{theorem}{section}

\title[Abelian Surfaces good away from 2]{Abelian Surfaces good away from 2}

\author[C.~Rasmussen and A.~Tamagawa]{Christopher Rasmussen and Akio
  Tamagawa}

\begin{document}
\begin{CJK}{UTF8}{min}

\maketitle

\begin{abstract}
Fix a number field $k$ and a rational prime $\ell$. We consider abelian varieties whose $\ell$-power torsion generates a pro-$\ell$ extension of $k(\bmu_{\ell^\infty})$ which is unramified away from $\ell$. It is a necessary, but not generally sufficient, condition that such varieties have good reduction away from $\ell$. In the special case of $\ell = 2$, we demonstrate that for abelian surfaces $A/\Q$, good reduction away from $\ell$ does suffice. The result is extended to elliptic curves and abelian surfaces over certain number fields unramified away from $\{2, \infty \}$. An explicit example is constructed to demonstrate that good reduction is not sufficient, even at $\ell = 2$, for abelian varieties of sufficiently high dimension.
\end{abstract}

\section{Introduction}

Let $k$ be a number field, $\bar{k}$ a fixed algebraic closure, and $\ell$ a rational prime number. We let $G_k$ denote the absolute Galois group $\Gal(\bar{k}/k)$. We define two extensions of the function field $\bar{k}(t)$: $\tilde{M}$ is the maximal extension of $\bar{k}(t)$ which is unramified outside the places $t = 0, 1, \infty$, and  $M$ is the maximal sub-extension of $\tilde{M}/\bar{k}(t)$ which is pro-$\ell$. Then the Galois group
\[ \pi := \Gal(M/\bar{k}(t)) \]
may be identified with the pro-$\ell$ algebraic fundamental group of $\P^1_{01\infty} \times_k \bar{k}$. From the tower of function fields $k(t) \subset \bar{k}(t) \subset M$, we have the induced short exact sequence
\[
\begin{tikzcd}
1 \ar{r} & \pi \ar{r} & \Gal\bigl( M/k(t) \bigr) \ar{r} & G_k \ar{r} & 1,
\end{tikzcd}
\]
where $\Gal(\bar{k}(t)/k(t)) \cong G_k$. For any element $\sigma \in G_k$, we may lift to an element in $\Gal(M/k(t))$, and use this lift to conjugate the normal subgroup $\pi$. We may associate to each $\sigma$ an automorphism of $\pi$; since the lift of $\sigma$ is only well-defined up to an element of $\pi$, the resulting automorphism is only defined up to an inner automorphism of $\pi$. This gives the \emph{outer canonical pro-$\ell$ Galois representation}:
\[ \Phi_\ell \colon G_k \longrightarrow \Out(\pi) := \Aut(\pi)/\Inn(\pi). \]
We let $\yama := \yama(k, \ell)$ denote the field fixed by the kernel of $\Phi_\ell$, and let $\ten := \ten(k, \ell)$ denote the maximal pro-$\ell$ extension of $k(\bmu_{\ell^\infty})$ which is unramified away from $\ell$.

Let us explain the motivation behind this notation: The kanji character $\yama$ (pronounced \emph{yama}) means ``mountain,'' and the character $\ten$ (pronounced \emph{ten}) means ``heaven.'' It is known by work of Anderson and Ihara \cite{Anderson-Ihara:1988} that $\yama \subseteq \ten$. It is natural to ask in which cases $\yama$ and $\ten$ coincide -- Ihara first posed this question for $k = \Q$ in 1986, which may be loosely phrased as: \emph{When does the mountain ($\yama$) reach the heavens ($\ten$)}?

The recent result of Francis Brown \cite{Brown:2012}, which demonstrates the Deligne-Ihara conjecture for any prime $\ell$, implies that $\yama(\Q, \ell) = \ten(\Q, \ell)$ for any odd regular prime $\ell$. This implication was demonstrated in an earlier work by Sharifi \cite{Sharifi:2002}; the interested reader may also find further details in the article \cite{Ihara:2002}.

Let $A/k$ be an abelian variety of dimension $g > 0$. We say $A/k$ is \emph{heavenly at $\ell$} if $k(A[\ell^\infty]) \subseteq \ten(k,\ell)$. By \cite[\S1, Theorem 1]{Serre-Tate:1968}, if $A/k$ is heavenly at $\ell$, then it necessarily has good reduction away from $\ell$. When a curve $C/k$ has a Jacobian variety $J/k$ which is heavenly at $\ell$, it is sometimes possible to demonstrate $k(J[\ell^\infty]) \subseteq \yama$, via a combination of geometric (\cite{Rasmussen:2004}, \cite{Papanikolas-Rasmussen:2005}) or arithmetic (\cite{Rasmussen-Tamagawa:2008}) arguments. This connection is described in greater detail in \cite{Rasmussen-Tamagawa:2008, Rasmussen-Tamagawa:2012}. However, for fixed $k$, $g$ and $\ell$, there are only finitely many $k$-isomorphism classes of such abelian varieties.

For fixed $k$, $g$, and $\ell$, we let $\mathscr{G}(k,g,\ell)$ denote the set of $k$-isomorphism classes of abelian varieties $A/k$ of dimension $g$ which have good reduction away from $\ell$. This set is finite by the Shafarevich Conjecture (more precisely, by Zarhin's extension of Faltings' proof of the Shafarevich Conjecture to unpolarized abelian varieties). We denote by $\mathscr{A}(k,g,\ell)$ the subset of $\mathscr{G}(k,g,\ell)$ of isomorphism classes represented by abelian varieties which are heavenly at $\ell$. Under assumption of GRH, the authors have shown in \cite{Rasmussen-Tamagawa:2012} that for any choice of $k$ and $g$, the set $\mathscr{A}(k,g,\ell)$ is empty for sufficiently large $\ell$.

Curiously, in the case $k = \Q$, $g = 1$, we have the following observations:
\[ \begin{split}
\mathscr{A}(\Q,1,2) & = \mathscr{G}(\Q,1,2), \\
\mathscr{A}(\Q,1,3) & = \mathscr{G}(\Q,1,3), \\
\mathscr{A}(\Q,1,5) & = \mathscr{G}(\Q,1,5) = \varnothing, \\
\mathscr{A}(\Q,1,7) & = \mathscr{G}(\Q,1,7).
 \end{split} \]
(The coincidence is lost already at $\ell = 11$, where $\mathscr{A}(\Q,1,11)$ is a proper and nonempty subset of $\mathscr{G}(\Q,1,11)$.) In the present note, we show a similar phenomenon holds in dimension $2$ when $\ell = 2$, at least when we restrict attention to principally polarized abelian varieties. To be more precise, we introduce two additional sets of isomorphism classes, as follows:
\[
\begin{matrix}
\mathscr{G}^\mathrm{pp}(k,g,\ell) & \subseteq & \mathscr{G}(k,g,\ell) \\
\rotatebox{90}{$\subseteq$} & & \rotatebox{90}{$\subseteq$} \\
\mathscr{A}^\mathrm{pp}(k,g,\ell) & \subseteq & \mathscr{A}(k,g,\ell) \\
\end{matrix}
 \]
The elements of $\mathscr{G}^\mathrm{pp}(k,g,\ell)$ are classes represented by abelian varieties which are  principally polarized (over $k$). The set $\mathscr{A}^\mathrm{pp}(k,g,\ell)$ denotes the subset of classes represented by heavenly principally polarized abelian varieties.
\begin{main-thm}
Let $K_0/\Q$ be an extension unramified away from $\{2, \infty \}$ with $[K_0:\Q] \leq 2$.
Every principally polarized abelian surface $A/K_0$ with good reduction away from $2$ is heavenly at $2$. In other words,
\[ \mathscr{A}^\mathrm{pp}(K_0,2,2) = \mathscr{G}^\mathrm{pp}(K_0,2,2). \]
\end{main-thm}
Explicitly, $K_0$ is one of the fields $\Q(\sqrt{d})$, $d \in \{\pm 1, \pm 2 \}$.

The paper proceeds as follows. In \S2, we break the proof into three cases, determined by the structure of $A/K_0$, an abelian variety representing a class in $\mathscr{G}^\mathrm{pp}(K_0, 2, 2)$. In \S3, we collect some facts about extensions of $\Q$ unramified away from $2$, and use these observations to extend a previous result \cite{Rasmussen:2004}. The three cases of the main theorem are handled in detail in \S4; the question in higher dimensions is briefly explored in \S5.

\section{Outline of Proof}

Let $\mathcal{A}_2$ denote the moduli space of principally polarized abelian surfaces, and $\mathcal{M}_2$ the moduli space of curves of genus $2$. The Deligne-Mumford compactification of $\mathcal{M}_2$ contains an intermediate space, $\mathcal{M}_2^*$, corresponding to the locus of compact type. It is known that the locus of compact type surjects onto $\mathcal{A}_2$. Moreover, the extremal locus, $\mathcal{M}_2^* - \mathcal{M}_2$, corresponds exactly to those points in $\mathcal{A}_2$ represented by products of elliptic curves. Consequently, over an algebraically closed field $k$, an abelian surface $A/k$ must be isomorphic to either the Jacobian of a (smooth) genus $2$ curve, or a product of elliptic curves. When $k$ is not algebraically closed, we have the following theorem of Gonz\'alez-Gu\`ardia-Rotger \cite[Theorem 3.1]{Gonzalez-Guardia-Rotger:2005}:
\begin{theorem}
Let $k$ be a number field, and let $A/k$ be a principally polarized abelian surface. Then as a polarized abelian variety, $A$ is isomorphic over $k$ to one of the following:
\begin{enumerate}[(1)]
\item $J$, the Jacobian variety of $C/k$, a smooth curve of genus $2$,
\item $E_1 \times E_2$, where $E_i/k$ are elliptic curves,
\item $W := \mathrm{Res}_{k'/k} E$, where $W$ is the Weil restriction of the elliptic curve $E/k'$, and $k'/k$ is a quadratic extension.
\end{enumerate}
\end{theorem}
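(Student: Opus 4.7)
The plan is to reduce to the geometric classification and then descend. Start by base-changing to $\bar{k}$. As indicated in the preceding discussion, the surjection $\mathcal{M}_2^* \to \mathcal{A}_2$ implies that $(A_{\bar{k}}, \lambda_{\bar{k}})$ is either isomorphic to $(\Jac(C_0), \Theta)$ for some smooth projective curve $C_0/\bar{k}$ of genus $2$, or to $(E_1' \times E_2', \lambda_1 \times \lambda_2)$, a product of principally polarized elliptic curves over $\bar{k}$. The task is then to descend each of these geometric descriptions to $k$, matching them respectively with possibilities (1), (2), and (3).

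In the Jacobian case, the goal is to produce $C/k$ with $(\Jac(C), \Theta) \cong (A, \lambda)$ over $k$. For each $\sigma \in G_k$, applying $\sigma$ to both sides of the geometric isomorphism yields $(\Jac(C_0^\sigma), \Theta) \cong (A, \lambda)$, so by Torelli there is some $\bar{k}$-isomorphism $f_\sigma \colon C_0 \to C_0^\sigma$. The collection $\{f_\sigma\}$ is a candidate descent datum, but $f_\sigma$ is only determined modulo the hyperelliptic involution $w$ of $C_0$, so a priori the cocycle obstruction lies in $H^2(G_k, \{1,w\}) \cong H^2(G_k, \Z/2\Z)$. The key observation is that the hyperelliptic involution corresponds under Torelli to $[-1] \in \Aut(A,\lambda)$, which is automatically defined over $k$; this canonical $k$-rational lift of $w$ lets one rigidify the choices of $f_\sigma$ so that the cocycle is trivial, yielding a model $C/k$.

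In the product case, one first notes that the two elliptic subvarieties $E_1', E_2' \subset A_{\bar{k}}$ are intrinsic to the polarized variety: they are precisely the pair of elliptic abelian subvarieties meeting in $0$, summing to $A_{\bar{k}}$, and such that the restriction of $\lambda$ decomposes as a product. Consequently $G_k$ acts on the unordered pair $\{E_1', E_2'\}$, giving a homomorphism $\chi \colon G_k \to S_2$. If $\chi$ is trivial, each $E_i'$ is $G_k$-stable as a sub-abelian variety, hence descends to an elliptic curve $E_i/k$, and one obtains $A \cong E_1 \times E_2$ over $k$. If $\chi$ is nontrivial, let $k'/k$ be the quadratic extension corresponding to $\ker\chi$; then $E_1'$ descends to an elliptic curve $E/k'$, and using the universal property of Weil restriction together with the computation $(\Res_{k'/k} E)_{\bar{k}} \cong E \times E^\tau$ (where $\tau$ generates $\Gal(k'/k)$), one identifies $A \cong \Res_{k'/k}E$ as polarized $k$-varieties.

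The main obstacle is the descent in the Jacobian case: justifying rigorously that the $H^2$-obstruction coming from the hyperelliptic involution vanishes, so that an indecomposable PPAS over $k$ really comes from a curve over $k$ rather than only from a curve over some quadratic twist. Handling this cleanly — perhaps by working directly with the coarse moduli map $\mathcal{M}_2 \to \mathcal{A}_2$ and using that its fibers are scheme-theoretically $\{1,w\}$-quotients, combined with the $k$-rationality of $[-1]$ on $(A,\lambda)$ — is the technical heart of the argument. The product case is comparatively formal once the canonicity of the decomposition of $(A_{\bar{k}},\lambda_{\bar{k}})$ is established.
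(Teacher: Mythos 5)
The paper does not prove this statement at all: it is quoted verbatim as Theorem~3.1 of Gonz\'alez--Gu\`ardia--Rotger, so there is no internal proof to compare against. Your sketch is, however, essentially the standard argument (and close in spirit to the cited one): reduce to the geometric dichotomy ``indecomposable Jacobian'' versus ``product of principally polarized elliptic curves'' coming from $\mathcal{M}_2^* \twoheadrightarrow \mathcal{A}_2$, then descend. Your treatment of the decomposable case is correct: the unordered pair $\{E_1',E_2'\}$ is intrinsic because these are (the translates through the origin of) the two irreducible components of the theta divisor, so $G_k$ acts on the pair, and the trivial/nontrivial cases of $\chi$ give (2) and (3) respectively via Galois descent of abelian subvarieties and the adjunction defining $\Res_{k'/k}$.

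The one place you leave open --- the vanishing of the $H^2(G_k,\{1,w\})$ obstruction in the Jacobian case --- deserves to be resolved rather than flagged, since for a general number field $H^2(G_k,\Z/2\Z)$ is large and a cocycle valued there has no reason to be a coboundary. The clean resolution is that there is no obstruction class to begin with: for hyperelliptic curves (hence for all genus~$2$ curves) the Torelli map
\[ \mathrm{Isom}(C,C') \longrightarrow \mathrm{Isom}\bigl((JC,\Theta),(JC',\Theta')\bigr) \]
is a \emph{bijection}, not merely a two-to-one or up-to-sign correspondence. Indeed, by Torelli every polarized isomorphism is $\pm f_*$, and in the hyperelliptic case $-f_* = (f\circ w)_*$ since $w_* = [-1]$; injectivity holds because an automorphism of a curve of genus $\geq 2$ acting trivially on $H^1$ is the identity. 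Hence each $\alpha_\sigma = \phi^{-1}\circ{}^\sigma\phi$ lifts to a \emph{unique} $f_\sigma\colon C_0^\sigma \to C_0$, and the cocycle condition for $\{f_\sigma\}$ follows automatically from that for $\{\alpha_\sigma\}$ by uniqueness. This is precisely why the statement is special to genus $2$: for non-hyperelliptic curves $\Aut(JC,\Theta) = \Aut(C)\times\{\pm1\}$ and a genuine quadratic twist obstruction can appear. With that point made precise, your argument is complete in outline and fills a gap the paper delegates to the literature.
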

Gonz\'alez, Gu\`ardia and Rotger give the explicit polarization on $J$, $E_1 \times E_2$, or $W$; it is always the `natural choice.' In order to prove the main theorem, we consider each of the cases (1), (2), (3). In each case, we appeal to some previously established results on the nature of Galois extensions of $\Q$ unramified away from $\{2, \infty\}$ of small degree. These are mainly due to Harbater, Jones, and Jones-Roberts.

\section{Extensions unramified away from $2$}

The database of number fields of Jones and Roberts \cite{Jones-Roberts:DB}, together with a result of Jones \cite{Jones:2010}, describe all small degree extensions of $\Q$ which are unramified away from $\{2, \infty \}$:
\begin{theorem}\label{theorem:Jones}
Suppose $K/\Q$ is a finite extension unramified away from $\{2, \infty \}$, and $[K:\Q] < 16$. Let $L/\Q$ denote the Galois closure of $K$, and let $G = \Gal(L/\Q)$. Then:
\begin{enumerate}[(a)]
\item $[K:\Q] \in \{1, 2, 4, 8 \}$,
\item if $[K:\Q] = 4$, then $G$ is isomorphic to $V$, $\Z/4\Z$, or $D_4$,
\item if $[K:\Q] = 8$, then $G$ is a $2$-group of order at most $128$,
\item there exist $\sigma, \tau \in G$ such that $G = \langle \sigma, \tau \rangle$ and $\tau^2 = 1$.
\end{enumerate}
\end{theorem}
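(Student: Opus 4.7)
The plan is to take parts (a)--(c) as essentially a database lookup and to prove (d) by a structural argument. For (a), I would appeal to discriminant bounds: any $K/\Q$ unramified away from $\{2,\infty\}$ has discriminant a power of $2$, and Odlyzko-type root-discriminant bounds preclude the existence of such $K$ of degree in $\{3,5,6,7,9,10,\ldots,15\}$; alternatively, the explicit classification in Jones~\cite{Jones:2010} handles this directly. Parts (b) and (c) reduce to finite enumerations: every quartic or octic field unramified outside $2$ appears in the Jones--Roberts database~\cite{Jones-Roberts:DB}, from which the Galois closure groups are read off. The bound $|G|\leq 128$ in (c) is part of Jones' classification.

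For part (d), the plan is as follows. By (a)--(c), the Galois closure $L/\Q$ is a $2$-extension, so $G = \Gal(L/\Q)$ is a finite $2$-group. By class field theory, the maximal abelian extension of $\Q$ unramified away from $\{2,\infty\}$ is $\Q(\bmu_{2^\infty})$, with Galois group $\Z_2^\times \cong \{\pm 1\}\times(1+4\Z_2)$. Thus $G^{\mathrm{ab}}$ is a quotient of $\Z_2^\times$; since $\Z_2^\times/(\Z_2^\times)^2 \cong \F_2^2$, the Frattini quotient $G/\Phi(G) = G^{\mathrm{ab}}/(G^{\mathrm{ab}})^2$ has $\F_2$-dimension at most $2$, and Burnside's basis theorem yields that $G$ is $2$-generated.

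Now let $\tau \in G$ be the image of complex conjugation, so $\tau^2=1$. If $L$ is totally real, then $\tau=1$ and the map $\Z_2^\times \twoheadrightarrow G^{\mathrm{ab}}$ factors through $\Z_2^\times/\langle -1\rangle \cong \Z_2$; hence $G^{\mathrm{ab}}$ is cyclic, which for a $2$-group forces $G$ itself cyclic, and any generator $\sigma$ together with $\tau=1$ suffices. Otherwise $\tau$ lies above $-1\in\Z_2^\times$, which is nontrivial modulo squares, so the image of $\tau$ in $G/\Phi(G)$ is nonzero; by Burnside's basis theorem we may complete $\{\tau\}$ to a minimal generating set $\{\sigma,\tau\}$ of $G$.

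The principal obstacle lies in parts (a)--(c), which rest on a deep classification of number fields of very small discriminant. The argument for (d), by contrast, is quite short once one observes that $-1 \in \Z_2^\times$ survives modulo squares, so the image of complex conjugation is forced to be a genuine minimal generator rather than hidden inside the Frattini subgroup. I would expect the paper's proof to cite \cite{Jones:2010, Jones-Roberts:DB} for (a)--(c), and either follow a similar structural route for (d) or else verify it directly by inspecting the list of groups appearing.
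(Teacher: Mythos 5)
The paper does not prove this theorem at all: it is quoted as a known result, attributed to Jones \cite{Jones:2010} and the Jones--Roberts database \cite{Jones-Roberts:DB}, which in turn incorporate work of Harbater, Mark\v{s}a\u{\i}tis, Brueggeman, and Lesseni. Your treatment of (a)--(c) as a citation/lookup therefore matches the paper exactly. For (d), the standard source is Mark\v{s}a\u{\i}tis's presentation of the Galois group of the maximal pro-$2$ extension of $\Q$ unramified outside $\{2,\infty\}$ as the free pro-$2$ product $\Z_2 * (\Z/2\Z)$, i.e.\ $\langle \sigma, \tau \mid \tau^2\rangle$, from which (d) follows for every finite quotient; your attempt to rederive the finite statement from class field theory is a reasonable and more self-contained route.

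However, your argument for (d) has a genuine gap in the second branch. From the fact that $-1$ is nontrivial in $\Z_2^\times/(\Z_2^\times)^2$ you conclude that the image of complex conjugation $\tau$ in $G/\Phi(G)$ is nonzero; but the surjection $\Z_2^\times/(\Z_2^\times)^2 \to G^{\mathrm{ab}}/(G^{\mathrm{ab}})^2 = G/\Phi(G)$ need not be injective, so it can kill the class of $-1$. This actually happens: take $L$ to be the imaginary cyclic quartic subfield of $\Q(\bmu_{32})$ cut out by the character $(\Z/32\Z)^\times \cong \langle -1\rangle\times\langle 3\rangle \to \Z/4\Z$ sending $-1\mapsto 2$ and $3 \mapsto 1$. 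Then $L$ is not totally real, yet complex conjugation is the square of a generator of $G\cong\Z/4\Z$ and hence lies in $\Phi(G)$, so it cannot be completed to a minimal generating set. The conclusion of (d) still holds there only because $G$ is cyclic. The correct case division is therefore cyclic versus non-cyclic rather than totally real versus not: if $G$ is cyclic, take $\tau = 1$; if $G$ is non-cyclic, then $\dim_{\F_2} G/\Phi(G) = 2$ forces the maximal elementary abelian subextension of $L/\Q$ to be all of $\Q(\bmu_8)$, so $G/\Phi(G)\cong\Gal(\Q(\bmu_8)/\Q)$, and since $i\in\Q(\bmu_8)$, complex conjugation restricts nontrivially there; only then does Burnside's basis theorem let you complete $\{\tau\}$ to a generating pair. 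With that repair your argument for (d) is complete.
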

Jones's result incorporates several previous results by Harbater, Mark\v{s}a\u{\i}tis, Brueggeman, and Lesseni. We remark more concretely on (c). Several octic extensions of $\Q$ unramified away from $\{2, \infty \}$ have Galois closures of degree $64$ \cite{Jones-Roberts:DB}; the bound $2^7 = 128$ follows simply from the observation that $\ord_2 |S_8| = 7$. The following lemma improves (c) slightly.
\begin{lemma}\label{lemma:octic}
Suppose $K/\Q$ is an octic extension unramified away from $\{2, \infty \}$. Let $L$ be the Galois closure of $K$ in $\bar{\Q}$, a fixed algebraic closure of $\Q$. Then $[L:\Q] =2^\nu$, with $\nu \leq 6$.
\end{lemma}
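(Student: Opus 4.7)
The plan is to sharpen the bound $\nu \leq 7$ supplied by Theorem \ref{theorem:Jones}(c) by excluding the case $\nu = 7$. Suppose for contradiction that $|G| = 128$, where $G := \Gal(L/\Q)$. Writing $H := \Gal(L/K) \leq G$, the subgroup $H$ has index $8$ in $G$, and since $L$ is the Galois closure of $K$ over $\Q$, the core $\bigcap_{g \in G} g H g^{-1}$ is trivial. The induced faithful transitive action on $G/H$ therefore embeds $G$ into $S_8$. Because $|S_8| = 2^7 \cdot 3^2 \cdot 5 \cdot 7$, any subgroup of order $2^7$ is a full Sylow $2$-subgroup; hence $G$ must be isomorphic to the (unique up to conjugacy) Sylow $2$-subgroup $P$ of $S_8$, which is classically realized as the iterated wreath product $\Z/2 \wr \Z/2 \wr \Z/2$.

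The heart of the argument is then the assertion that $P$ cannot be generated by two elements. Write $P_n$ for the $n$-fold iterated wreath product, so $P \cong P_3$. From $P_n \cong P_{n-1}^2 \rtimes \Z/2$, with the outer involution acting by swapping the two copies of $P_{n-1}$, abelianizing identifies these two copies and yields a recursion $P_n^{ab} \cong P_{n-1}^{ab} \times \Z/2$. Starting from $P_1^{ab} = \Z/2$, induction gives $P_3^{ab} \cong (\Z/2)^3$. By the Burnside basis theorem for $2$-groups, the minimal number of generators of $P$ equals $\dim_{\F_2} P/\Phi(P) = \dim_{\F_2} P^{ab} = 3$.

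This contradicts Theorem \ref{theorem:Jones}(d), which supplies an explicit generating pair $\sigma, \tau$ of $G$ (in particular $G$ is $2$-generated). Hence $\nu \neq 7$, so $\nu \leq 6$. The only nontrivial content is the abelianization computation for the iterated wreath product; once that is in hand, the argument closes immediately because the order constraint $|G| = 128 = |\mathrm{Syl}_2(S_8)|$ pins $G$ down to a single isomorphism class, leaving no wiggle room for other $2$-generated candidates.
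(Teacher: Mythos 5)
Your proof is correct and follows essentially the same route as the paper: identify $G$ as a Sylow $2$-subgroup of $S_8$ via the coset action on $G/H$ and derive a contradiction with Theorem \ref{theorem:Jones}(d) by showing that group is not $2$-generated. Where the paper exhibits an explicit Sylow $2$-subgroup and dismisses the generation question as a ``routine calculation,'' you supply the cleaner justification -- the wreath-product recursion $P_n^{\mathrm{ab}} \cong P_{n-1}^{\mathrm{ab}} \times \Z/2\Z$ giving $P_3^{\mathrm{ab}} \cong (\Z/2\Z)^3$, hence three generators by the Burnside basis theorem -- which is a genuine improvement in transparency but not a different argument.
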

\begin{proof}
Let $G = \Gal(L/\Q)$. By Theorem \ref{theorem:Jones} (c), we need only eliminate the possibility that $|G| = 128$, which we now suppose for the sake of contradiction. We may identify $G$ with some subgroup of $S_8$; note that $G$ must be a Sylow-$2$ subgroup of $S_8$. Consider the following subgroup $H \leq S_8$:
\[ H := \bigl\langle (1234), (13), (5678), (57), \tau \bigr\rangle, \]
where $\tau = (15)(26)(37)(48)$. (The idea is to `mix' two distinct copies of $D_4$ with $\tau$ to obtain $H$.) The subgroup $H$ is also a Sylow-$2$ subgroup of $S_8$; hence, $G \cong H$. However, a routine calculation verifies that $H$ cannot be generated by $2$ elements, let alone satisfy the condition (d) of Theorem \ref{theorem:Jones}. Thus, $G$ cannot either, and this gives the desired contradiction.
\end{proof}
Finally, we will need one more result on such extensions, in this case due to Harbater \cite[Theorem 2.25]{Harbater:1994}.
\begin{proposition}\label{prop:harbater}
Suppose $L/\Q$ is a Galois extension which is unramified away from $\{2, \infty \}$. If $[L:\Q] < 272$, then $[L:\Q]$ is a power of $2$.
\end{proposition}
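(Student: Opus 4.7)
The natural approach is to derive a contradiction by combining an upper bound on $|d_L|$ (coming from local ramification at $2$) with a lower bound on the root discriminant $|d_L|^{1/n}$ (Odlyzko's unconditional bounds), under the assumption that $[L:\Q] = n$ has an odd prime factor.

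First I would exploit that since $L/\Q$ is unramified at every finite prime other than $2$, we have $|d_L| = 2^e$ with the exponent $e$ controlled by the local completion $L_\mathfrak{P}/\Q_2$ at a prime above $2$. Using the conductor-discriminant formula (or the formula for the valuation of the different in terms of the higher ramification groups), one obtains a bound of the shape $e/n \leq f(I)$, where $f$ depends on the inertia group $I \leq G := \Gal(L/\Q)$ at $\mathfrak{P}$. Writing $I = P \rtimes C$ with $P$ the wild pro-$2$ part and $C$ a cyclic group of odd order, the tame contribution to $e/n$ is sharply $(|C|-1)/|C|$, while the wild contribution is controlled by the higher ramification filtration.

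Next I would invoke Odlyzko's lower bounds $|d_L|^{1/n} \geq B(n, r_1, r_2)$, which are tabulated explicitly in the range $n < 272$. The key observation is that an odd prime $p \mid n$ forces a nontrivial tame cyclic quotient of $I$ at $2$ (perhaps after passing to an appropriate quotient of $G$ or to a subfield fixed by a Sylow $2$-subgroup, which is still unramified away from $\{2,\infty\}$), and the resulting upper bound on $|d_L|^{1/n}$ from the tame part of the ramification turns out to be too small to be compatible with the Odlyzko lower bound whenever $n < 272$.

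The main obstacle is numerical: one must push the local different estimates and the Odlyzko bounds to the precision needed for the specific threshold $272$ to fall out, and one must be careful about the signature, since ramification at $\infty$ is permitted and Odlyzko's bounds are weakest in the totally real case. Harbater's argument very likely uses the sharpest available versions of these estimates, and may further exploit that any minimal counterexample must have $G$ with strongly constrained structure (for instance, the odd part of $G$ must embed as a subquotient of the tame inertia at $2$, which is procyclic), narrowing the possibilities enough for a case-by-case numerical verification.
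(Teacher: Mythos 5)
First, a point of reference: the paper does not prove this proposition at all --- it is quoted verbatim as Theorem 2.25 of Harbater's \emph{Galois groups with prescribed ramification}, so there is no internal proof to match. Your outline does identify the method Harbater actually uses: play an upper bound on the root discriminant $|d_L|^{1/n}$, obtained from the different at the unique ramified finite prime $2$, against unconditional Minkowski/Odlyzko lower bounds. In that sense the strategy is the right one.

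As written, however, this is a plan rather than a proof, and the steps you defer are exactly where the theorem lives. The direct estimate from the different, $v(\mathfrak{d}) \le e - 1 + e\,v_2(e)$, gives $|d_L|^{1/n} \le 2^{\,1 + v_2(e) - 1/e}$, and this is useless on its own once the wild inertia is even moderately large: already $v_2(e) = 4$ yields an upper bound near $32$, which exceeds the asymptotic unconditional Odlyzko limit (about $22.4$ in the totally complex case), so no contradiction can fall out of a bare comparison. One therefore needs genuine group-theoretic reductions before the numerics close --- for instance, that $G = \Gal(L/\Q)$ is generated by the conjugates of the inertia group at $2$ (the fixed field of their normal closure being unramified everywhere, hence trivial), a minimal-counterexample analysis of the smallest normal subgroup with $2$-group quotient, and the fact that the tame quotient of inertia is cyclic of odd order dividing $[L:\Q]$ --- and it is precisely this interplay that produces the sharp threshold $272 = 16 \cdot 17$. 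Your assertion that an odd prime dividing $n$ ``forces a nontrivial tame cyclic quotient of $I$'' is one of these missing steps: it is not automatic (a group generated by conjugates of a $2$-group need not be a $2$-group) and must be extracted from the reductions above. A smaller but relevant slip: Odlyzko's lower bounds are weakest in the totally \emph{complex} case, not the totally real one; since ramification at $\infty$ is permitted, the totally complex bounds are the ones the argument must survive. None of this makes the approach wrong, but the proposal does not yet establish the statement; in the context of the paper, the intended justification is simply the citation to Harbater.
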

As Harbater observes, this is the best possible bound, as there exists a subfield of $\Q(\bmu_{64})$ whose Hilbert class field gives a Galois extension $L/\Q$ unramified away from $\{2, \infty \}$ of exact degree $272$. (This field is discussed in more detail below.)

In \cite{Rasmussen:2004}, the first author proved $\mathscr{A}^\mathrm{pp}(\Q,1,2) = \mathscr{G}^\mathrm{pp}(\Q,1,2)$ through a geometric argument, by demonstrating a certain criterion of Anderson and Ihara holds for a representative of each class in $\mathscr{G}^\mathrm{pp}(\Q,1,2)$. Before turning to the main theorem, we give a more direct proof of this result, which also holds for some number fields other than $\Q$. We will also use this result in the next section.
\begin{proposition}\label{prop:ell_curve}
Suppose $K_0/\Q$ is an extension unramified away from $\{2, \infty \}$ and $[K_0:\Q] \leq 4$. Then  $\mathscr{A}^\mathrm{pp}(K_0,1,2) = \mathscr{G}^\mathrm{pp}(K_0,1,2)$.
\end{proposition}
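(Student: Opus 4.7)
The plan is to reduce everything to showing that, for any such elliptic curve $E/K_0$, the Galois group $\Gal(K_0(E[2])/K_0)$, which embeds into $\mathrm{GL}_2(\F_2) \cong S_3$, is a $2$-group.

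The reduction is straightforward. By Serre-Tate, good reduction away from $2$ ensures that $K_0(E[2^\infty])/K_0$ is unramified away from $2$, and the Weil pairing gives $K_0(\bmu_{2^\infty}) \subseteq K_0(E[2^\infty])$. Hence, to conclude that $E$ is heavenly at~$2$, it would suffice to show that $\Gal(K_0(E[2^\infty])/K_0)$ is pro-$2$. Under the embedding of this Galois group into $\mathrm{GL}_2(\Z_2)$ afforded by the Tate module, the kernel of reduction mod~$2$ lies in the principal congruence subgroup, which is pro-$2$; consequently the full Galois group is pro-$2$ precisely when its mod-$2$ image -- namely $\Gal(K_0(E[2])/K_0)$ -- is a $2$-group.

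To prove the reduction, I would argue by contradiction: suppose $G := \Gal(K_0(E[2])/K_0)$ has order divisible by $3$, so $G$ is either $A_3$ or the full $S_3$ as a subgroup of $S_3 \cong \mathrm{GL}_2(\F_2)$. In either case the permutation action of $G$ on the three nonzero points of $E[2]$ is transitive, with point stabilizer of index exactly~$3$; therefore the field of definition $L := K_0(P)$ of any nonzero $P \in E[2]$ is a cubic extension of $K_0$ contained in $K_0(E[2])$. As $L/K_0$ is a sub-extension of $K_0(E[2])/K_0$ (unramified away from $2$ by Serre-Tate) and $K_0/\Q$ is unramified away from $\{2,\infty\}$, the extension $L/\Q$ is likewise unramified away from $\{2,\infty\}$. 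However, $[L:\Q] = 3[K_0:\Q]$ lies in $\{3,6,12\}$, every element of which is strictly less than $16$ yet not a power of $2$. This contradicts Theorem~\ref{theorem:Jones}(a).

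I do not anticipate any serious obstacle. The essential geometric input is simply the observation that a $3$-cycle in the mod-$2$ image necessarily produces a cubic sub-extension of $K_0(E[2])/K_0$ (rather than only the full degree-$6$ extension), bringing the degree over $\Q$ safely into the range where the Jones-Roberts classification forbids such extensions outright.
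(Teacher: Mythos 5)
Your proof is correct and follows essentially the same route as the paper: both arguments rule out $3 \mid [K_0(E[2]):K_0]$ by exhibiting a cubic subextension $F/K_0$ with $[F:\Q]\le 12$ and invoking Theorem \ref{theorem:Jones}(a). The only (harmless) difference is in wrapping up: you conclude directly from the pro-$2$ kernel of $GL_2(\Z_2)\to GL_2(\F_2)$ that $\Gal(K_0(E[2^\infty])/K_0)$ is pro-$2$, whereas the paper passes to the Galois closure of $K_0(E[2])$ over $\Q$ and applies Theorem \ref{theorem:Jones} once more.
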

\begin{proof}
Suppose $E/K_0$ is an elliptic curve with good reduction away from $2$. Let $L$ denote the Galois closure of $K_0(E[2])/\Q$. Since the tower
\[ \Q \subseteq K_0 \subseteq K_0(E[2]) \subset K_0(E[2^\infty]) \]
is unramified away from $\{2, \infty \}$ and the top extension is pro-$2$, it suffices to show that $[L : K_0]$ is a power of $2$. Let $d = [K_0(E[2]):K_0]$. Since the Galois group of this extension is isomorphic to a subgroup of $GL_2(\F_2)$, $d \in \{1, 2, 3, 6 \}$. If $d = 3$ or $d = 6$, then there exists a field $F$ with $K_0 \subset F \subseteq K_0(E[2])$ and $[F : K_0] = 3$. But now $[F:\Q] \leq 12$, and so by Theorem \ref{theorem:Jones}, $[F:\Q]$ is a power of $2$. Since it is also divisible by $3$, this is a contradiction. Thus, $d \leq 2$, and so $[K_0(E[2]):\Q] \leq 8$. By Theorem \ref{theorem:Jones}, we see that $[L:\Q]$ is a power of $2$. Consequently, $E$ is heavenly at $2$ and the result holds.
\end{proof}

\section{Abelian Surfaces}

We now turn to the proof of the Main Theorem. Before considering the three cases outlined in \S2, we treat specifically the case $K_0 = \Q$, where a stronger result may be obtained, and with less effort. Let $M$ be the maximal extension of $\Q$ which is unramified away from $\{2, \infty \}$, and set $\Delta := \Gal(M/\Q) = \pi_1(\Z[\tfrac{1}{2}])$.
\begin{proposition}
Let $\rho \colon \Delta \to GL_4(\F_2)$ be a Galois representation. Then the image of $\rho$ is a $2$-group.
\end{proposition}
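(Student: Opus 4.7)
The plan is to use the Galois correspondence to translate the structure of $G := \rho(\Delta)$ as a permutation group on $\F_2^4$ into a statement about subfields of $\Q$, and then invoke Theorem~\ref{theorem:Jones}(a). Let $L \subseteq M$ be the fixed field of $\ker\rho$, so $\Gal(L/\Q) \cong G$. For each $G$-orbit $\mathcal{O} \subseteq \F_2^4$, the stabilizer $G_v$ of any $v \in \mathcal{O}$ gives a subfield $L^{G_v} \subseteq L$ with $[L^{G_v}:\Q] = |\mathcal{O}|$, unramified away from $\{2,\infty\}$. Since $|\mathcal{O}| \leq 15 < 16$, Theorem~\ref{theorem:Jones}(a) forces $|\mathcal{O}| \in \{1, 2, 4, 8\}$ for every such orbit.

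Now suppose for contradiction that $|G|$ is not a power of $2$. Because $|GL_4(\F_2)| = 2^6 \cdot 3^2 \cdot 5 \cdot 7$, I may choose $g \in G$ of prime order $p \in \{3, 5, 7\}$, and I would derive a contradiction by analysing the rational canonical form of $g$ on $\F_2^4$. For $p = 5$, the factorization $x^5 - 1 = (x-1)(x^4+x^3+x^2+x+1)$ in $\F_2[x]$ has irreducible quartic, so $g$ acts irreducibly on $\F_2^4$ and every $\langle g \rangle$-orbit on $\F_2^4 \setminus \{0\}$ has size $5$; thus every $G$-orbit has size in $\{5,10,15\}$, a forbidden value. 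For $p = 7$, the factorization $x^7-1 = (x-1)(x^3+x+1)(x^3+x^2+1)$ combined with the fact that $3 \nmid 4$ (so $\F_2^4$ cannot be an $\F_8$-module) forces the minimal polynomial of $g$ to be $(x-1)$ times one of the irreducible cubics. Then $g$ has a unique nonzero fixed vector and a single size-$7$ $\langle g \rangle$-orbit in the complementary $3$-dimensional invariant subspace, producing a $G$-orbit of size $7$ or $14$, again forbidden.

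The remaining case $p = 3$ is the most delicate. Since $x^3 - 1$ is separable, $g$ is semisimple with characteristic polynomial either $(x^2+x+1)^2$ or $(x-1)^2(x^2+x+1)$. The former gives only size-$3$ $\langle g \rangle$-orbits on $\F_2^4 \setminus \{0\}$ and is dismissed as in the preceding cases. In the latter, $\langle g \rangle$ has orbit sizes $1,1,1,3,3,3,3$ on $\F_2^4 \setminus \{0\}$, and the task is to show these cannot regroup into $G$-orbits of power-of-two size. Writing each $G$-orbit's size as $a + 3b$, where $(a,b)$ counts the size-$1$ and size-$3$ $\langle g\rangle$-orbits it absorbs, the condition $a+3b \in \{1,2,4,8\}$ together with $a \leq 3$ forces $(a,b) \in \{(1,0),(2,0),(1,1),(2,2)\}$, so $a \geq b$ in each $G$-orbit; summing gives $3 = \sum a \geq \sum b = 4$, the desired contradiction. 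This bookkeeping in the final subcase is the main obstacle of the proof: the other cases succeed because $\langle g\rangle$ alone already exhibits a forbidden orbit size, whereas here one must use the global totals to rule out every possible regrouping.
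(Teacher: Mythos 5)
Your first paragraph mirrors the paper's opening move --- orbits of $G$ on $\F_2^{\oplus 4}$ correspond to subfields of degree $<16$ unramified away from $\{2,\infty\}$, so Theorem \ref{theorem:Jones}(a) forces all orbit sizes into $\{1,2,4,8\}$ --- but from there the two arguments genuinely diverge. The paper does not argue by contradiction: it invokes parts (b) and (c) of Theorem \ref{theorem:Jones} to conclude that the Galois closure $L_v$ of each stabilizer field $M_v$ is itself a $2$-extension, and then uses faithfulness ($\cap_v G_v = \{1\}$) to write the whole field $M_0$ as the compositum of the $L_v$, hence a $2$-extension. You use only the numerical constraint (a) and replace (b), (c) by pure group theory: Cauchy's theorem plus the rational canonical forms of elements of order $3$, $5$, $7$ in $GL_4(\F_2)$. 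Both routes work; the paper's is shorter because the structural information in (b) and (c) is available for free, while yours extracts more from less input, and the closing $\sum a \geq \sum b$ count in the $p=3$ case is a nice touch.

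One step needs repair. In the $p=7$ case the $\langle g\rangle$-orbit decomposition of $V^\circ$ has sizes $1,7,7$ (the nonzero fixed vector $v_1$, the nonzero vectors of the invariant $3$-dimensional piece $W$, and the coset $v_1 + (W\setminus\{0\})$), not the single size-$7$ orbit plus fixed vector your sentence suggests; and the $G$-orbit containing $W\setminus\{0\}$ need not have size $7$ or $14$ --- it could a priori have size $8 = 1+7$ by absorbing $v_1$. The conclusion still holds: if that happens, the remaining size-$7$ $\langle g\rangle$-orbit is then forced to be a full $G$-orbit of forbidden size. Equivalently, your $(a,b)$-bookkeeping applies here with $\sum a = 1$ and $\sum b = 2$: the only admissible type for a $G$-orbit absorbing a size-$7$ piece is $(a,b)=(1,1)$, giving $1 = \sum a \geq \sum b = 2$, a contradiction. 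With that patch, the remaining cases ($p=5$, both $p=3$ subcases, and the final count $3 = \sum a \geq \sum b = 4$) are all correct.
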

\begin{proof}
The representation $\rho$ induces an action of $\Delta$ on $V := \F_2^{\oplus 4}$, and necessarily factors through $G := \Gal(M_0/\Q)$ for some finite Galois extension $M_0/\Q$. Selecting $M_0$ minimal, we have $G \cong \rho(\Delta)$. Further, there is an induced faithful action of $G$ on $V^\circ := V - \{0 \}$. For any $v \in V^\circ$, let $G_v$ denote the stabilizer of $v$, $M_v$ the subfield of $M_0$ fixed by $G_v$, and $L_v$ the Galois closure of $M_v/\Q$. As $M_0/\Q$ is Galois, we have $L_v \subseteq M_0$ for all $v$. For any $v \in V^\circ$, an application of the orbit-stabilizer theorem gives
\[ [M_v : \Q] = [G : G_v] = \#\mathrm{Orb}(v) \leq |V^\circ| < 16. \]
By Theorem \ref{theorem:Jones}, $[M_v : \Q] \mid 8$ and $L_v/\Q$ is a $2$-extension. The faithfulness of the action guarantees $\cap_v G_v = \{1 \}$, or in other words, that $M_0$ is the compositum of the $M_v$. Equivalently, $M_0$ is the compositum of the $L_v$, and so $M_0/\Q$ is a $2$-extension and $\rho(\Delta)$ is a $2$-group.
\end{proof}
\begin{corollary}
$\mathscr{G}(\Q,2,2) = \mathscr{A}(\Q,2,2)$.
\end{corollary}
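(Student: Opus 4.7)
The plan is to apply the preceding proposition directly to the Galois action on $A[2]$, then bootstrap upward using standard facts about congruence subgroups of $GL_4(\Z_2)$. Let $A/\Q$ represent a class in $\mathscr{G}(\Q,2,2)$. By \cite[\S1, Theorem 1]{Serre-Tate:1968}, good reduction away from $2$ forces $\Q(A[2^\infty])/\Q$ to be unramified outside $\{2,\infty\}$, so every mod-$2^n$ Galois action factors through $\Delta = \Gal(M/\Q)$. Since $\dim_{\F_2} A[2] = 2g = 4$, the mod-$2$ representation
\[ \rho : \Delta \to \Aut(A[2]) \cong GL_4(\F_2) \]
has $2$-group image by the preceding proposition; equivalently, $\Q(A[2])/\Q$ is a finite $2$-extension.

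Next, I would observe that the kernel of the reduction $GL_4(\Z_2) \twoheadrightarrow GL_4(\F_2)$, namely the principal congruence subgroup $I + 2\, M_4(\Z_2)$, is a pro-$2$ group: its successive quotients $(I + 2^k M_4(\Z_2))/(I + 2^{k+1} M_4(\Z_2))$ are visibly elementary abelian $2$-groups. Consequently $\Gal(\Q(A[2^\infty])/\Q(A[2]))$ is pro-$2$, and combining with the previous step the full group $\Gal(\Q(A[2^\infty])/\Q)$ is pro-$2$.

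Finally, to conclude $\Q(A[2^\infty]) \subseteq \ten(\Q,2)$, I would form the compositum $K := \Q(A[2^\infty]) \cdot \Q(\bmu_{2^\infty})$ and apply the restriction map
\[ \Gal(K/\Q) \hookrightarrow \Gal(\Q(A[2^\infty])/\Q) \times \Gal(\Q(\bmu_{2^\infty})/\Q), \]
which realizes $\Gal(K/\Q)$ as a closed subgroup of a product of two pro-$2$ groups (the second factor is $\Z_2^\times \cong \Z/2\Z \times \Z_2$), hence itself pro-$2$. Thus $K/\Q(\bmu_{2^\infty})$ is pro-$2$; it is unramified away from the primes above $2$, since ramification at infinity is automatic ($\Q(\bmu_{2^\infty})$ being totally complex). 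Therefore $K \subseteq \ten(\Q,2)$, so $A$ is heavenly at $2$. There is no serious obstacle beyond the preceding proposition itself; the only point that requires care is the final compositum step, which is needed because $\ten$ is defined relative to $\Q(\bmu_{2^\infty})$ rather than $\Q$, and the proposition only gives pro-$2$ structure over $\Q$.
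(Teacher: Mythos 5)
Your proof is correct and takes essentially the same route as the paper: the key step in both is to apply the preceding proposition to the mod-$2$ representation to conclude that $\Q(A[2])/\Q$ is a $2$-extension. The remaining points you elaborate (the pro-$2$ structure of the kernel of $GL_4(\Z_2)\to GL_4(\F_2)$ and the compositum with $\Q(\bmu_{2^\infty})$) are exactly the routine details the paper leaves implicit.
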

\begin{proof}
Suppose $[A] \in \mathscr{G}(\Q,2,2)$, and let $\rho$ be the induced Galois representation on the $2$-torsion of $A$. Then in the context of the previous proposition, $M_0 = \Q(A[2])$ is a $2$-extension of $\Q$, and hence $[A] \in \mathscr{A}(\Q,2,2)$.
\end{proof}
\subsection{Case 1: Jacobians}
In this section, we prove the following proposition:
\begin{proposition}\label{prop:Jacobian}
Let $K_0/\Q$ be an extension unramified away from $\{2, \infty \}$, with $[K_0:\Q] \leq 2$. Let $C/K_0$ be a smooth projective curve of genus $2$, and let $J$ denote the Jacobian variety of $C$. Suppose $[J] \in \mathscr{G}^\mathrm{pp}(K_0,2,2)$. Then $[J] \in \mathscr{A}^\mathrm{pp}(K_0,2,2)$.
\end{proposition}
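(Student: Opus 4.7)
The plan is to reduce the statement to proving that $[K_0(J[2]):K_0]$ is a power of $2$, and then to bound this degree via the classical description of $J[2]$ in terms of the six Weierstrass points of $C$ through the exceptional isomorphism $S_6 \cong \mathrm{Sp}_4(\F_2)$. For the reduction, the kernel of $GL_4(\Z_2) \twoheadrightarrow GL_4(\F_2)$ is pro-$2$, so $K_0(J[2^\infty])/K_0(J[2])$ is pro-$2$; the Weil pairing yields $\bmu_{2^\infty} \subseteq K_0(J[2^\infty])$; and Serre--Tate applied to the good reduction hypothesis makes the whole tower $K_0(J[2^\infty])/K_0$ unramified away from $2$. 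Consequently, once $[K_0(J[2]):K_0]$ is shown to be a power of $2$, $K_0(J[2^\infty])/K_0(\bmu_{2^\infty})$ is automatically pro-$2$ and unramified away from $2$, yielding $K_0(J[2^\infty]) \subseteq \ten(K_0,2)$.

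To bound $[K_0(J[2]):K_0]$, let $W$ be the zero-dimensional $K_0$-subscheme of $C$ cut out by the fixed points of the hyperelliptic involution (canonical in genus $2$, hence $K_0$-rational). In characteristic $0$, $W$ is \'etale of degree $6$, so $W = \bigsqcup_i \mathrm{Spec}(K_i)$ with each $K_i/K_0$ finite separable and $\sum_i [K_i:K_0] = 6$. From $2W_i \sim K_C \sim 2W_j$ we have $[W_i - W_j] \in J[2]$ for all $i,j$; these $15$ classes are distinct and exhaust $J[2] \setminus \{0\}$, and the induced homomorphism $S_6 \to \Aut(J[2])$ is injective (in fact realizes $S_6 \cong \mathrm{Sp}_4(\F_2)$), giving the identification $K_0(J[2]) = K_0(W)$, the compositum of the $K_i$. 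Since $J$ has good reduction away from $2$, Serre--Tate implies $K_0(J[2])/K_0$ is unramified away from $2$, hence so is each $K_i/K_0$. As $[K_i:\Q] \leq 2 \cdot [K_i:K_0] \leq 12 < 16$, Theorem~\ref{theorem:Jones} forces $[K_i:\Q]$ to be a power of $2$; together with $[K_0:\Q] \leq 2$ this restricts $[K_i:K_0]$ to $\{1, 2, 4\}$.

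It remains to show that the Galois closure $\tilde{K}_i$ of each $K_i/K_0$ has $2$-power degree, since then the Galois group of $K_0(W)$, being a subgroup of $\prod_i \Gal(\tilde K_i/K_0)$, is itself a $2$-group. The cases $[K_i:K_0] \leq 2$ are immediate. For $[K_i:K_0] = 4$, the group $G_i := \Gal(\tilde{K}_i / K_0)$ is a transitive subgroup of $S_4$; if $3$ divides $|G_i|$, then $G_i \in \{A_4, S_4\}$, and the fixed field of a Sylow $2$-subgroup of $G_i$ is a cubic extension of $K_0$ inside $\tilde{K}_i$, of $\Q$-degree at most $6$ and unramified away from $2$, contradicting Theorem~\ref{theorem:Jones}. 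Hence $|G_i|$ is a $2$-power, completing the proof. The main technical point to verify carefully is the identification $K_0(J[2]) = K_0(W)$ via the exceptional isomorphism $S_6 \cong \mathrm{Sp}_4(\F_2)$; once this is in place, the remainder of the argument is a direct application of Theorem~\ref{theorem:Jones} together with elementary finite-group theory.
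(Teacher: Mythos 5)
Your proof is correct and follows the paper's strategy in its main lines: reduce heavenliness to the statement that $\Gal(K_0(J[2])/K_0)$ is a $2$-group, identify $K_0(J[2])$ with the compositum of the residue fields $K_i$ of the six Weierstrass points, and use Theorem \ref{theorem:Jones} to constrain $[K_i:K_0]\in\{1,2,4\}$. Where you genuinely diverge is the final step. The paper forms the Galois closure $L'$ of $K_0(J[2])$ over $\Q$, enumerates the possible degree vectors $(d_1,\dots,d_t)$, bounds $[L':\Q]\le 2^8=256$ (using Lemma \ref{lemma:octic} to control the octic residue fields), and then invokes Harbater's Proposition \ref{prop:harbater} (degree $<272$ forces a $2$-power). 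You instead argue directly over $K_0$ that each $\Gal(\tilde K_i/K_0)$ is a $2$-group: a transitive subgroup of $S_4$ of order divisible by $3$ is $A_4$ or $S_4$, whose Sylow $2$-subgroup has index $3$, producing a cubic extension of $K_0$ inside $K_0(J[2])$ and hence a degree-$3$ or degree-$6$ extension of $\Q$ unramified away from $\{2,\infty\}$, contradicting Theorem \ref{theorem:Jones}(a). This buys you a shorter argument that dispenses with Harbater's bound, the octic lemma, and the case table, and it makes transparent that only the Galois closure over $K_0$ (not over $\Q$) is needed for heavenliness; the paper's route, by contrast, yields the extra quantitative information $[L':\Q]\le 256$. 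The one point you rightly flag as needing care --- the $G_{K_0}$-equivariant bijection $\bar S_2\to J[2]\setminus\{0\}$ together with the injectivity of $\Aut(\bar S)\to\Aut(\bar S_2)$, giving $K_0(J[2])=K_0(W)$ --- is exactly the standard fact the paper quotes from Cassels--Flynn, so your argument is complete.
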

Let $C/K_0$ be a smooth projective curve of genus $2$. Then $C$ is hyperelliptic, and admits a degree $2$ morphism $C \to \P := \P^1_{K_0}$, which is unique up to coordinate change of $\P^1_{K_0}$. The ramification locus $S \subset C$ for $C \to \P$ (that is, the support of the sheaf $\Omega_{C/\P}$) has degree $6$. Namely, $\bar{S} := S \times_{K_0} \bar{K}_0$ consists of $6$ points. Further, $\bar{S}$ admits a natural action of $G_{K_0} := \Gal(\bar{K}_0/K_0)$, and the set of $G_{K_0}$-orbits of $\bar{S}$ is identified with $S$.

For a finite set $T$ and an integer $n \geq 0$, let $T_n$ denote the collection of subsets of $T$ of cardinality $n$; that is, $T_n := \{U \subseteq T : |U| = n \}$. The natural map $\Aut(T) \to \Aut(T_n)$ between symmetric groups is injective, if $0 < n < |T|$. With this notation established, it is a standard result that the map
\[ \bar{S}_2 \to J[2]-\{0\}, \qquad \{s,s'\} \mapsto \mathrm{cl}(s-s') \]
is well-defined, $G_{K_0}$-equivariant, and bijective (for further details, see \cite{Cassels-Flynn:1996}).
It follows that the compositum $K_0(S)$ of the residue fields $K_0(P)$ for $P\in S$ coincides with $K_0(J[2])$. Thus, under the assumption that $[J]\in\mathscr{G}^{\mathrm{pp}}(K_0,2,2)$, we have great control over the field $K_0(J[2])$ for certain fields $K_0$, by combining the observations of Jones and Harbater:
\begin{proposition}
Let $K_0/\Q$ be unramified away from $\{2, \infty \}$, with $[K_0:\Q] \leq 2$. Let $C/K_0$ be a smooth projective curve of genus $2$, and let $J$ denote the Jacobian of $C$. Let $K' = K_0(J[2])$, and let $L'$ be the Galois closure of $K'/\Q$. Suppose that $J/K_0$ has good reduction away from $2$. Then $L'/\Q$ is a $2$-extension satisfying $[L':\Q] \leq 256$.
\end{proposition}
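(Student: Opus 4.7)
The plan is to leverage the identification $K' = K_0(S)$ arising from the bijection $\bar{S}_2 \cong J[2]\setminus\{0\}$ established above, and to analyze $K'/K_0$ orbit-by-orbit on the six-point set $\bar{S}$. Since $J/K_0$ has good reduction away from $2$, the extension $K'/K_0$ is unramified outside $2$; together with the hypothesis on $K_0$, every subfield of $L'$ is then unramified away from $\{2,\infty\}$, so the results of \S 3 apply uniformly throughout the argument.

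The first step is to constrain the orbit sizes. Given any $G_{K_0}$-orbit $O \subseteq \bar{S}$ with representative $P$, the residue field $K_0(P)$ has degree $[K_0(P):\Q] = |O|\cdot[K_0:\Q] \leq 12$, strictly less than $16$, so Theorem~\ref{theorem:Jones}(a) forces $[K_0(P):\Q] \in \{1,2,4,8\}$. This immediately gives $|O| \in \{1,2,4\}$, eliminating orbits of sizes $3$, $5$, and $6$ a priori.

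The second step bounds, for each orbit $O$, the Galois closure $F_O$ of $K_0(P)/\Q$ (equivalently, of $K_0(O)/\Q$, since $K_0(O)$ is already the Galois closure of $K_0(P)/K_0$). When $[K_0(P):\Q] \leq 4$, Theorem~\ref{theorem:Jones}(b) ensures $F_O/\Q$ is a $2$-extension of degree at most $8$. The remaining case $[K_0(P):\Q] = 8$ occurs only when $|O|=4$ and $[K_0:\Q]=2$; here Lemma~\ref{lemma:octic} ensures $F_O/\Q$ is a $2$-extension of degree at most $2^6 = 64$. Because $K' = \prod_O K_0(O)$, the Galois closure $L'$ equals $\prod_O F_O$, which is a compositum of $2$-extensions and hence itself a $2$-extension of $\Q$.

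For the quantitative bound, each $F_O$ contains $K_0$, so $[L':K_0] \leq \prod_O [F_O:K_0]$. I would then enumerate the orbit structures with parts in $\{1,2,4\}$ summing to $6$ — namely $(1,1,1,1,1,1)$, $(1,1,1,1,2)$, $(1,1,2,2)$, $(2,2,2)$, $(1,1,4)$, and $(2,4)$ — and check each one. The worst case is $(2,4)$ with $[K_0:\Q]=2$, yielding $[L':\Q] \leq 2\cdot 4\cdot 32 = 256$; all other cases produce strictly smaller bounds, and the case $[K_0:\Q]=1$ yields $[L':\Q] \leq 16$. The main technical obstacle is that the refined estimate of Lemma~\ref{lemma:octic} (giving $2^6$ rather than Theorem~\ref{theorem:Jones}(c)'s naive $2^7$) is essential here — without it, the compositum bound would only reach $[L':\Q] \leq 512$.
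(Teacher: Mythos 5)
Your proof is correct and follows essentially the same route as the paper: decompose $\bar S$ into $G_{K_0}$-orbits, bound each residue field via Theorem \ref{theorem:Jones} and Lemma \ref{lemma:octic}, and multiply the degrees of the Galois closures over $K_0$ across the same six orbit types, with the same worst case $(4,2)$ giving $2\cdot 32\cdot 4 = 256$. The only (harmless) difference is that you obtain the $2$-extension claim directly as a compositum of $2$-extensions, whereas the paper deduces it from the numerical bound $256 < 272$ via Proposition \ref{prop:harbater}.
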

\begin{proof}
We work in a fixed algebraic closure of $K_0$. Let $S = \{P_1,\dots,P_t\}$ be the ramification locus of the degree $2$ morphism $C \to \P$, as above. For $K_i := K_0(P_i)$, the extension $K_i/K_0$ is contained in $K_0(J[2])$, hence
necessarily unramified away from $\{2, \infty \}$. Let $d_i := [K_i : K_0]$. A priori, $d_i \leq 6$, and so $[K_i:\Q] \leq 2 d_i \leq 12$. By Theorem \ref{theorem:Jones}, we see $d_i \in \{1, 2, 4 \}$. Possibly after reindexing, we may assume $d_1 \geq \cdots \geq d_t$. The possible values of $\mathbf{d} = (d_1, \dots, d_t)$ are given in Table \ref{table:degrees}.
\begin{table}[t!]
\begin{tabular}{lrc}
\toprule
$t\;\;\;$ & $\mathbf{d} = (d_1, \dots, d_t)$ & Bound for $[\tilde{L}:\Q]$ \\
\midrule
$2$ & $(4, 2)$ & $2^8$ \\
$3$ & $(4, 1, 1)$ & $2^6$ \\
$3$ & $(2, 2, 2)$ & $2^7$ \\
$4$ & $(2, 2, 1, 1)$ & $2^5$ \\
$5$ & $(2, 1, 1, 1, 1)$ & $2^3$ \\
$6$ & $(1, 1, 1, 1, 1, 1)$ & $2^1$ \\
\bottomrule
\end{tabular}
\smallskip
\caption{Bounds in case $[K_0:\Q] = 2$.}\label{table:degrees}
\end{table}

Now, let $L_i$ denote the Galois closure of $K_i/\Q$ for each $i$, and let $\tilde{L}$ denote the compositum of $L_1, L_2, \dots, L_t$. As $K'$ is the compositum of the $K_i$, it follows that $L' = \tilde{L}$. Thus, it suffices to prove $[\tilde{L}:\Q]$ divides $2^8$; by Proposition \ref{prop:harbater}, we need only prove $[\tilde{L}:\Q] \leq 2^8$.

If $K_0 = \Q$, by Theorem \ref{theorem:Jones}, we see $[L_i:\Q]$ divides $8$ if $d_i = 4$; otherwise, $[L_i:\Q] = d_i$. Consequently,
\[ [\tilde{L}:\Q] \leq \prod_{i=1}^t [L_i:\Q] \leq 16, \]
by checking each possible list of degrees $\mathbf{d}$.

Now suppose $[K_0:\Q] = 2$. Then $[K_i:\Q] = 2d_i$, and by Theorem \ref{theorem:Jones} and Lemma \ref{lemma:octic}, we see $[L_i:\Q]$ divides $2$, $8$, or $64$, as $d_i$ is $1$, $2$, or $4$, respectively. We obtain
\[ [\tilde{L}:\Q] = [K_0:\Q][\tilde{L}:K_0] \leq 2 \cdot \prod_{i=1}^t [L_i:K_0]. \]
So, for example when $\mathbf{d} = (4,2)$, or $\mathbf{d} = (2,2,2)$, we have
\[ [\tilde{L}:\Q] \leq 2 \cdot 32 \cdot 4 = 2^8, \qquad [\tilde{L}:\Q] \leq 2 \cdot 4^3 = 2^7, \]
respectively. The remaining cases are handled by the same style of argument (all the bounds are given in Table \ref{table:degrees}).
\end{proof}
\begin{proof}[Proof of Proposition \ref{prop:Jacobian}]
Suppose $C/K_0$ is a curve of genus $2$ with Jacobian $J$, and $[J] \in \mathscr{G}^\mathrm{pp}(K_0,2,2)$. Then we have
\[ K_0 \subseteq K_0(J[2]) \subset K_0(J[2^\infty]). \]
The tower is unramified away from $2$; the Galois closure of the lower extension is a $2$-extension by the previous proposition, and the upper extension is pro-$2$ generally. Thus, $[J] \in \mathscr{A}^\mathrm{pp}(K_0,2,2)$.
\end{proof}

\subsection{Case 2: Product of elliptic curves}

The argument in this case is straightforward.
\begin{proposition}
Suppose $K_0/\Q$ is unramified away from $\{2, \infty \}$ and $[K_0:\Q] \leq 4$. Suppose $[A] \in \mathscr{G}^\mathrm{pp}(K_0,2,2)$, and $A \cong_{K_0} E_1 \times E_2$ for some elliptic curves $E_i/K_0$. Then $A$ is heavenly at $2$.
\end{proposition}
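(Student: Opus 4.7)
The plan is to decompose the torsion of $A$ across the product and reduce the claim to the elliptic curve case already handled by Proposition \ref{prop:ell_curve}.

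First, I would observe that good reduction is inherited by the factors. Since $A \cong_{K_0} E_1 \times E_2$ has good reduction away from $2$, and the Néron model of a product is (away from the bad primes of either factor) the product of Néron models, each $E_i/K_0$ likewise has good reduction away from $2$. A principal polarization on $E_i$ is automatic, so $[E_i] \in \mathscr{G}^\mathrm{pp}(K_0,1,2)$ for $i=1,2$.

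Next, apply Proposition \ref{prop:ell_curve} with the same field $K_0$ (whose degree over $\Q$ is at most $4$). This gives $[E_i] \in \mathscr{A}^\mathrm{pp}(K_0,1,2)$, i.e., $K_0(E_i[2^\infty]) \subseteq \ten(K_0,2)$ for $i=1,2$.

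Finally, since $A[2^\infty] = E_1[2^\infty] \oplus E_2[2^\infty]$ as $G_{K_0}$-modules, one has
\[ K_0(A[2^\infty]) = K_0(E_1[2^\infty]) \cdot K_0(E_2[2^\infty]). \]
The field $\ten(K_0,2)$ is itself a (pro-$2$) field containing $K_0(\bmu_{2^\infty})$ and is closed under taking compositums of its subfields, so the compositum on the right lies in $\ten(K_0,2)$. Hence $A$ is heavenly at $2$.

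There is essentially no obstacle here: the only nontrivial input is Proposition \ref{prop:ell_curve}, and the combinatorics of the product of torsion modules is immediate. The remark ``the argument in this case is straightforward'' in the paper is accurate, and the proof is essentially a one-line invocation of the elliptic curve statement.
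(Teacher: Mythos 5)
Your proposal is correct and follows essentially the same route as the paper: deduce good reduction of each factor, invoke Proposition \ref{prop:ell_curve}, and conclude by taking the compositum of the torsion fields inside $\ten$. Your version is if anything slightly more careful, working with the full $2^\infty$-torsion rather than just the $2$-torsion.
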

\begin{proof}
Necessarily, the curves $E_i/K_0$ have good reduction away from $2$. By Proposition \ref{prop:ell_curve}, both curves are heavenly at $2$. Thus, $K_0(E_i[2]) \subseteq \ten$, and so
\[ K_0(A[2]) = K_0(E_1[2]) \cdot K_0(E_2[2]) \subseteq \ten, \]
also.
\end{proof}

\subsection{Case 3: Restriction of scalars}

The situation in this case is slightly more delicate, because the quadratic extension introduced by the restriction of scalars could possibly ramify at a prime other than $2$. We will make use of the following lemmas, which are essentially exercises in Galois theory:
\begin{lemma}\label{lemma:gal-over-gal}
Let $E/F$ and $L/E$ be Galois extensions (inside a fixed algebraic closure of $F$), of degrees $d$ and $m$, respectively. Let $L^*/F$ denote the Galois closure of $L/F$. Then $[L^*:F] \leq d \cdot m^d$.
\end{lemma}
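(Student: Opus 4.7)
The plan is to exploit the Galoisness of $E/F$ to arrange that every $F$-conjugate of $L$ still contains $E$, reducing the problem to bounding the degree of a compositum of Galois extensions of $E$.

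Fix an algebraic closure $\bar F$ of $F$ containing $L$. Since $E/F$ is Galois, any $F$-embedding of $E$ into $\bar F$ has image equal to $E$. Choose lifts $\tilde\sigma_1,\dots,\tilde\sigma_d\in\Aut(\bar F/F)$ of the $d$ elements of $\Gal(E/F)$. Because $L/F$ is separable (being the tower of two separable extensions), the Galois closure $L^*$ is obtained as the compositum of the $F$-conjugates of $L$ inside $\bar F$, so
\[
L^* \;=\; \tilde\sigma_1(L)\cdot \tilde\sigma_2(L)\cdots \tilde\sigma_d(L).
\]

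The key observation is now immediate: each $\tilde\sigma_i(L)$ contains $\tilde\sigma_i(E) = E$, and since $L/E$ is Galois of degree $m$, each $\tilde\sigma_i(L)/E$ is also Galois of degree $m$. Therefore $L^*$ is a compositum of $d$ Galois extensions of $E$, each of degree $m$. A standard estimate for compositums of Galois extensions (the Galois group of the compositum embeds into the product of the individual Galois groups) yields $[L^*:E] \leq m^d$, and multiplying by $[E:F]=d$ gives the required bound
\[
[L^*:F] \;=\; [L^*:E]\cdot [E:F] \;\leq\; d\cdot m^d.
\]

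There is no real obstacle here; the only subtle point is recognizing that the Galoisness of the \emph{bottom} extension $E/F$ is what forces every conjugate of $L$ to sit over the \emph{same} copy of $E$, without which one would only have the weaker bound $[L^*:F]\leq (dm)^d$ coming from viewing $L/F$ as an arbitrary degree-$dm$ extension.
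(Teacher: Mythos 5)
Your proof is correct and is essentially the same as the paper's: both arguments identify the $F$-conjugates of $L$ with the $d$ fields obtained from coset representatives of $\Gal(E/F)$ (using that $E/F$ is Galois so each conjugate contains $E$, and that $L/E$ is Galois so there are only $d$ of them, each Galois over $E$ of degree $m$), and then bound the compositum over $E$ by $m^d$. The paper phrases this via conjugate subgroups $H_i = \sigma_i H \sigma_i^{-1}$ inside $\Gal(L^*/F)$ rather than via conjugate fields, but this is only a cosmetic difference.
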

\begin{proof}
Set $G = \Gal(L^*/F)$, $N = \Gal(L^*/E)$, $H = \Gal(L^*/L)$. Then $H \trianglelefteq N \trianglelefteq G$. Since $[G:N] = d$, we may choose $\sigma_1, \dots, \sigma_d \in G$ representing each coset of $G/N$. Set $H_i = \sigma_i H \sigma_i^{-1}$. Then it is routine to verify that each $H_i \trianglelefteq N$. Let $L_i \subseteq L^*$ be the subfield fixed by $H_i$. These $d$ fields give every conjugate of $L/F$ within $L^*$. Consequently, $L^*$ coincides with the compositum over $E$ of the $L_i$, and each $L_i/E$ is Galois. Since $[E:F] = d$ and $[L_i:E] = m$, we obtain $[L^*:F] \leq d \cdot m^d$.
\end{proof}
We let $\mathfrak{S}_3$ denote the symmetric group on $3$ symbols.
\begin{lemma}\label{lemma:S3}
Let $F_1/F_0$ and $F_2/F_0$ be Galois $\mathfrak{S}_3$-extensions. The Galois group $G$ of the compositum $F_1 F_2/F_0$ is one of $\mathfrak{S}_3 \times \mathfrak{S}_3$, $(\Z/3\Z \times \Z/3\Z) \rtimes \Z/2\Z$, $\mathfrak{S}_3$. In every case, $G$ has a subgroup of exact index $3$.
\end{lemma}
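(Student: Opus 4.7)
The plan is to realize $G$ as a subdirect product inside $\mathfrak{S}_3 \times \mathfrak{S}_3$, classify the possibilities, and then exhibit a subgroup of exact index $3$ in each case.

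First I would use the restriction map to embed $G = \Gal(F_1 F_2/F_0) \hookrightarrow \Gal(F_1/F_0) \times \Gal(F_2/F_0) \cong \mathfrak{S}_3 \times \mathfrak{S}_3$; this map is injective since an automorphism of $F_1 F_2$ is determined by its restrictions to the generating subfields $F_1$ and $F_2$. Because each $F_i/F_0$ is itself Galois, the projection $p_i \colon G \to \mathfrak{S}_3$ onto the $i$-th factor is surjective, so $G$ is a subdirect product of two copies of $\mathfrak{S}_3$.

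Next I would pin down the possibilities for $G$ by a Goursat-style argument. Set $N := p_2(G \cap \ker p_1) \trianglelefteq \mathfrak{S}_3$; normality in $\mathfrak{S}_3$ follows from the surjectivity of $p_1$ together with the fact that $G \cap \ker p_1$ is normal in $G$. Since the normal subgroups of $\mathfrak{S}_3$ are $\{e\}$, $A_3$, and $\mathfrak{S}_3$, we have $|N| \in \{1, 3, 6\}$, and $|G| = |p_1(G)| \cdot |G \cap \ker p_1| = 6\,|N| \in \{6, 18, 36\}$. These three orders give precisely the three groups listed: order $36$ forces $G = \mathfrak{S}_3 \times \mathfrak{S}_3$; order $6$ forces $G$ to be a diagonal copy of $\mathfrak{S}_3$; and order $18$ means $G$ contains $A_3 \times A_3$ as an index-$2$ normal subgroup, with any element in the nontrivial coset of the form $(\tau_1, \tau_2)$ with $\tau_i$ a transposition, which acts by simultaneous inversion on $A_3 \times A_3$, yielding $G \cong (\Z/3\Z \times \Z/3\Z) \rtimes \Z/2\Z$.

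For the final step I would exhibit an index-$3$ subgroup in each case. In $\mathfrak{S}_3$, the subgroup $\langle \tau \rangle$ generated by a transposition has order $2$, index $3$. In $\mathfrak{S}_3 \times \mathfrak{S}_3$, the subgroup $\langle \tau \rangle \times \mathfrak{S}_3$ has order $12$, index $3$. In the order-$18$ case, the preimage $p_1^{-1}(\langle \tau \rangle)$ has order $2 \cdot |N| = 6$, hence index $3$. The only step requiring some care is the identification of the order-$18$ case; once the semidirect-product structure there is established, verifying the index-$3$ subgroups is routine.
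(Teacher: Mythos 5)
Your proof is correct and follows essentially the same route as the paper: the paper classifies $G$ via the degree of the (Galois) intersection $F_1\cap F_2/F_0$, which can only be $1$, $2$, or $6$ because $\mathfrak{S}_3$ has no normal subgroup of index $3$, and this is exactly your Goursat argument with $N = \Gal(F_2/F_1\cap F_2)$. Your write-up just makes explicit the case-by-case identification and the index-$3$ subgroups that the paper leaves to the reader.
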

\begin{proof}
Since $\mathfrak{S}_3$ has no normal subgroups of index $3$, the (necessarily Galois) extension $F_1 \cap F_2/F_0$ has degree $1$, $2$, or $6$. The result follows by considering each possibility in turn.
\end{proof}

\begin{proposition}
Suppose $K_0/\Q$ is unramified away from $\{2, \infty \}$ and $[K_0:\Q] \leq 2$. Suppose $[A] \in \mathscr{G}^\mathrm{pp}(K_0, 2, 2)$. If $A$ is isomorphic over $K_0$ to $\Res_{K'/K_0} E_1$ for some elliptic curve $E_1/K'$, then $A$ is heavenly at $2$.
\end{proposition}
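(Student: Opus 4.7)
The plan is to reduce the assertion to Proposition \ref{prop:ell_curve} applied both to $E_1$ and to its Galois conjugate over $K'$, once we have shown that the auxiliary quadratic extension $K'/K_0$ is itself unramified away from $2$.

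The first, and key, step is this ramification control. I would invoke the standard conductor formula for the Weil restriction: the conductor of $\Res_{K'/K_0} E_1$ over $K_0$ is, up to the appropriate exponent, divisible by the relative discriminant $\mathrm{Disc}(K'/K_0)$ and by $\mathrm{N}_{K'/K_0}(\mathrm{cond}(E_1))$. Since $[A] \in \mathscr{G}^\mathrm{pp}(K_0,2,2)$ forces the conductor of $A$ to be supported only above $2$, both factors must be as well. Hence $K'/K_0$ is unramified away from $2$, and $E_1/K'$ has good reduction away from $2$. Combined with the hypothesis on $K_0$, this yields $[K':\Q] \leq 4$ and $K'/\Q$ unramified away from $\{2,\infty\}$.

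With this in hand, Proposition \ref{prop:ell_curve} applies to both $E_1/K'$ and to its Galois conjugate $E_1^\sigma/K'$, where $\sigma$ generates $\Gal(K'/K_0)$; both elliptic curves are heavenly at $2$ over $K'$. Using the base change isomorphism $A \times_{K_0} K' \cong E_1 \times E_1^\sigma$, we then obtain
\[ K_0(A[2^\infty]) \subseteq K'(A[2^\infty]) = K'(E_1[2^\infty]) \cdot K'(E_1^\sigma[2^\infty]) \subseteq \ten(K', 2). \]
A short verification --- using that $K'(\bmu_{2^\infty})/K_0(\bmu_{2^\infty})$ is pro-$2$ and unramified away from $2$ --- shows $\ten(K',2) \subseteq \ten(K_0,2)$, and hence $A$ is heavenly at $2$ over $K_0$. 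The main obstacle is the first step: the conductor formula for Weil restrictions is the linchpin converting good reduction of $A$ into the crucial statement that $K'$ itself is unramified away from $2$. I would expect Lemmas \ref{lemma:gal-over-gal} and \ref{lemma:S3} to come into play only if one prefers to bypass the conductor formula, instead bounding the Galois closure of $K'(E_1[2]) \cdot K'(E_1^\sigma[2])$ over $\Q$ directly and then appealing to Proposition \ref{prop:harbater}.
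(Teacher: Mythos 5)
Your proposal is correct, but it takes a genuinely different route from the paper, and in fact shows that the case the paper labels ``delicate'' cannot occur. The paper splits into two subcases: (i) $K'/K_0$ unramified away from $\{2,\infty\}$, handled exactly as in your second paragraph (apply Proposition \ref{prop:ell_curve} to $E_1$ and its conjugate over $K'$); and (ii) $K'/K_0$ ramified at some $\mathfrak{p} \nmid 2$, which the authors dispose of by a purely group-theoretic argument: they show $\Gal(K'(E_i[2])/K')$ cannot be all of $\mathfrak{S}_3$ (via Lemma \ref{lemma:S3} and the nonexistence of sextic extensions of $\Q$ unramified away from $\{2,\infty\}$ from Theorem \ref{theorem:Jones}), bound $[K_0(A[2]):K_0] \leq 9$, and then invoke Lemma \ref{lemma:gal-over-gal} and Harbater's bound (Proposition \ref{prop:harbater}) to conclude the Galois closure is a $2$-extension. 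Your conductor argument eliminates case (ii) outright: the formula $\mathfrak{f}(\Res_{K'/K_0}E_1) = \mathfrak{d}_{K'/K_0}^{2}\cdot \mathrm{N}_{K'/K_0}(\mathfrak{f}(E_1))$ (Milne; it also follows from N\'eron--Ogg--Shafarevich applied to the Mackey decomposition of $\Ind_{G_{K'}}^{G_{K_0}} T_\ell E_1$ restricted to inertia at an odd ramified prime, which always contains a nontrivial quadratic character) forces $\mathfrak{d}_{K'/K_0}$ to be supported above $2$. What your approach buys is brevity and a sharper structural fact; what it costs is the importation of the conductor formula for Weil restrictions, whereas the paper stays entirely within elementary Galois theory plus the Jones--Harbater classification. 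Two small points to tighten: state the discriminant exponent precisely (it is $2\dim E_1 = 2$, though any positive exponent suffices for your purpose) and give a citation; and in the final descent, note that $\ten(K',2)/K_0(\bmu_{2^\infty})$ is Galois, pro-$2$ (since $[K'(\bmu_{2^\infty}):K_0(\bmu_{2^\infty})] \leq 2$), and unramified away from $2$ at finite places, with the infinite places harmless because $K_0(\bmu_{2^\infty})$ is totally imaginary --- this justifies $\ten(K',2) \subseteq \ten(K_0,2)$.
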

\begin{proof}
Let $E_2/K'$ be the Galois twist of $E_1$ with respect to the unique nontrivial element of $\Gal(K'/K_0)$.
As $A/K_0$ has good reduction away from $2$, the same is true for $E_i/K'$, $i=1,2$. If $K'/K_0$ is unramified away from $\{2, \infty \}$, then $K'/\Q$ is a quartic extension also unramified away from $\{2, \infty \}$, and so by Proposition \ref{prop:ell_curve}, each $E_i$ is heavenly at $2$. Thus, the Galois closure of $K'(E_i[2])/\Q$ is a $2$-extension. Since $K_0(A[2])$ is contained in the compositum of these two extensions, the Galois closure of $K_0(A[2])/\Q$ is also a $2$-extension, and so $[A] \in \mathscr{A}^\mathrm{pp}(K_0,2,2)$.

Now, suppose that some prime $\mathfrak{p}$ of $K'$ ramifies in $K'/K_0$, with $\mathfrak{p} \nmid 2\mathcal{O}_{K'}$. Notice that we have the equality of fields
\[ M := K' \cdot K_0(A[2]) = K'(A[2]) = K'(E_1[2] \cup E_2[2]) = K'(E_1[2]) \cdot K'(E_2[2]). \]
The two extensions $K'(E_i[2])/K'$ must have isomorphic Galois groups. Choose $\Gamma \leq \mathfrak{S}_3$ such that $\Gamma \cong \Gal(K'(E_i[2])/K')$, and let $c = |\Gamma|$. We must have that $[M:K']$ divides $c^2$. Moreover, since $K'/K_0$ and $K_0(A[2])/K_0$ are both Galois extensions, $M$ is in fact Galois over $K_0$.
\[
\begin{tikzcd}[row sep=2ex]
{} & M \arrow[-, swap]{dl}{2} \arrow[-]{dd} \\
K_0(A[2]) & \\
& K' \\
K_0 \arrow[-]{uu} \arrow[-, swap]{ur}{2} &
\end{tikzcd}
\]
Since $M/K_0$ is Galois, we see $[M:K_0(A[2])]$ divides $2 = [K':K_0]$, and $[M:K']$ divides $[K_0(A[2]):K_0]$. But since $K'/K_0$ is ramified at $\mathfrak{p}$ and $K_0(A[2])/K_0$ is not ramified at $\mathfrak{p}$, the extension $M/K_0(A[2])$ must reflect this ramification; it cannot be a trivial extension. Thus, $[M:K_0(A[2])] = 2$ and $\Gal(M/K') \cong \Gal(K_0(A[2])/K_0)$.

We claim $c \neq 6$. For otherwise, Lemma \ref{lemma:S3} guarantees that $\Gal(K_0(A[2])/K_0)$ possesses a subgroup of index $3$. Such a subgroup corresponds to a cubic extension $L/K_0$ with $L \subset K_0(A[2])$. Consequently, $L/\Q$ is a degree $6$ extension, unramified away from $\{2, \infty \}$, which contradicts Theorem \ref{theorem:Jones}.

So $c < 6$, which in fact implies $c \leq 3$ and $[K_0(A[2]):K_0] \leq 9$. Thus, by Lemma \ref{lemma:gal-over-gal}, the Galois closure of $K_0(A[2])/\Q$ has degree at most $2 \cdot 9^2 = 162$. By Proposition \ref{prop:harbater}, the Galois closure must be a $2$-extension, and so $[A] \in \mathscr{A}^\mathrm{pp}(K_0,2,2)$.
\end{proof}

\section{Failure in higher dimensions}

At this point, one might na\"{\i}vely guess that in the special case $k = \Q$, $\ell = 2$, $\mathscr{A}^\mathrm{pp}(\Q,g,2) = \mathscr{G}^\mathrm{pp}(\Q,g,2)$ for all $g \geq 1$. This is not the case, as we now show via Weil restriction.

First, we briefly review the useful Example 2.24 of \cite{Harbater:1994}. Let $\zeta \in \bar{\Q}$ be a primitive $64$-th root of unity, and set $\eta_0 = \zeta^{16}(\zeta + \zeta^{-1})$ and $F_0 = \Q(\eta_0)$. Let $L_0$ be the Hilbert class field of $F_0$. Harbater demonstrates that $L_0/\Q$ is Galois with Galois group $\Z/17\Z \rtimes (\Z/17\Z)^\times$, and $L_0/\Q$ is a (unique) Galois extension of degree $272$ unramified away from $2$.

We now demonstrate a non-heavenly element of $\mathscr{G}^\mathrm{pp}(\Q,272,2)$.
\begin{proposition}
The set $\mathscr{A}^\mathrm{pp}(\Q,272,2)$ is a proper subset of $\mathscr{G}^\mathrm{pp}(\Q,272,2)$. That is, there exists an abelian variety $A/\Q$ of dimension $272$ with good reduction away from $2$ which is not heavenly at $2$.
\end{proposition}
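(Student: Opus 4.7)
The plan is to exhibit the required $A$ as a Weil restriction of scalars from Harbater's field $L_0/\Q$ of degree $272$. I would choose any elliptic curve $E_0/\Q$ whose conductor is a power of $2$ (for instance, the CM curve $y^2 = x^3 - x$, of conductor $32$), and set $E := E_0 \times_\Q L_0$ and $A := \Res_{L_0/\Q} E$.

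First I would verify that $A/\Q$ is a principally polarized abelian variety of dimension $[L_0 : \Q] = 272$ with good reduction away from $2$. The dimension count and the abelian variety structure are standard features of Weil restriction along a finite separable extension. Because $L_0/\Q$ is unramified away from $2$, every prime of $L_0$ lying above an odd rational prime is unramified over $\Q$, and at such primes $E$ has good reduction; so $A$ inherits good reduction away from $2$. The canonical principal polarization of $E/L_0$ induces a natural principal polarization on $A/\Q$, so that $[A] \in \mathscr{G}^{\mathrm{pp}}(\Q, 272, 2)$.

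Next, I would identify the $2$-adic Galois representation. By the formal properties of restriction of scalars, the $G_\Q$-representation on $T_2(A)$ is the induced representation $\Ind_{G_{L_0}}^{G_\Q} T_2(E)$. Equivalently, $A \otimes_\Q \bar{\Q} \cong \prod_{\sigma} E^\sigma$, the product taken over the $272$ embeddings $\sigma : L_0 \hookrightarrow \bar{\Q}$, and $G_\Q$ permutes these factors through its quotient $\Gal(L_0/\Q)$ acting regularly on itself. Since this permutation action is faithful and $E[2] \neq 0$, the kernel of the action of $G_\Q$ on $A[2]$ is contained in $G_{L_0}$, which gives the key inclusion $L_0 \subseteq \Q(A[2])$.

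The contradiction is then immediate. The field $\ten(\Q, 2)$ is a pro-$2$ extension of $\Q$: it is pro-$2$ over $\Q(\bmu_{2^\infty})$ by construction, and $\Gal(\Q(\bmu_{2^\infty})/\Q) \cong \Z_2^\times \cong \Z/2\Z \times \Z_2$ is itself pro-$2$. If $A$ were heavenly at $2$, then
\[ L_0 \subseteq \Q(A[2]) \subseteq \Q(A[2^\infty]) \subseteq \ten(\Q, 2), \]
forcing $[L_0:\Q] = 272 = 2^4 \cdot 17$ to be a power of $2$, which is absurd. Hence $[A] \notin \mathscr{A}^{\mathrm{pp}}(\Q, 272, 2)$. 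The only mildly delicate point in the plan is the descent of the product principal polarization on $\prod_\sigma E^\sigma$ along Weil restriction to a principal polarization on $A/\Q$; this is a standard feature of $\Res_{L_0/\Q}$ applied to a principally polarized abelian variety, and is the one place where I would want to spell out the verification.
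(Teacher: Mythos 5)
Your proposal is correct and is essentially the paper's own argument: the same Weil restriction $A=\Res_{L_0/\Q}(E\times_\Q L_0)$ along Harbater's degree-$272$ field $L_0$, the same identification of $A[2]$ with $\Ind_{G_{L_0}}^{G_\Q}E[2]$ forcing $L_0\subseteq\Q(A[2])$, and the same pro-$2$ obstruction from $272=2^4\cdot 17$. The only minor differences are that the paper additionally insists $\Q(E[2])=\Q$ so that the induced action is a pure permutation of summands (your weaker observation, that the kernel of the action on the induced module lies in $G_{L_0}$ for any $E$ with $E[2]\neq 0$, is also valid), and the principal polarization you flag as needing verification is handled in the paper by citing Diem--Naumann.
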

\begin{proof}
Select an elliptic curve $E/\Q$ such that $[E] \in \A(\Q,1,2)$ and $\Q(E[2]) = \Q$. There are two such curves, up to $\Q$-isomorphism, given by Cremona's labeling as `32a2' and `64a1':
\[ \mathrm{(32a2)}\quad y^2 = x^3 - x, \qquad \qquad \mathrm{(64a1)} \quad y^2 = x^3 - 4x. \]
Let $L_0$ be as in the previous paragraph, and set
\[ A := \Res_{L_0/\Q} (E \times_\Q L_0). \]
This is an abelian variety defined over $\Q$, and $[A] \in \mathscr{G}^\mathrm{pp}(\Q,272,2)$.
(Note that $A$ is principally polarized by \cite[Proposition 2]{Diem-Naumann:2003}.)
When we view $A[2]$ as a $G_\Q$-module, we have
\[ A[2] \cong \Ind_{G_{L_0}}^{G_\Q} E[2]. \]
Let $H = \Gal(L_0/\Q)$. Viewed only as an abelian group, we have
\[ \Ind_{G_{L_0}}^{G_\Q} E[2] = \bigoplus_{\sigma \in H} E[2] \cong \bigoplus_{\sigma \in H} \F_2^{\oplus 2}. \]
However, $E[2] \subseteq E(\Q)$, and as $G_\Q$-modules, we have $E[2] \cong_{G_\Q} \F_2^{\oplus 2}$, where $G_\Q$ acts on $\F_2^{\oplus 2}$ trivially. Thus the action of $G_\Q$ on $\Ind_{G_{L_0}}^{G_\Q} E[2]$ is given simply by the permutation action on the summands indexed by $\sigma \in H$. From this we conclude
\[ L_0 = L_0 \cdot \Q(E[2]) \subseteq \Q(E[2])\bigl( A[2] \bigr) = \Q(A[2]). \]
Since $[L_0:\Q] = 272$, we have $\Q(A[2^\infty]) \not\subseteq \ten$, and so $[A] \not\in \mathscr{A}^\mathrm{pp}(\Q,272,2)$.
\end{proof}

\bibliographystyle{halpha}
\bibliography{Ras-Tam-bib}

\def\cprime{$'$}
\begin{thebibliography}{GGR05}

\bibitem[AI88]{Anderson-Ihara:1988}
G.~Anderson and Y.~Ihara.
\newblock Pro-{$\ell$} branched coverings of {${\bf P}\sp 1$} and higher
  circular {$\ell$}-units.
\newblock {\em Ann. of Math. (2)}, 128(2):271--293, 1988.

\bibitem[Bro12]{Brown:2012}
F.~Brown.
\newblock Mixed {T}ate motives over {$\Bbb Z$}.
\newblock {\em Ann. of Math. (2)}, 175(2):949--976, 2012.

\bibitem[CF96]{Cassels-Flynn:1996}
J.~W.~S. Cassels and E.~V. Flynn.
\newblock {\em Prolegomena to a middlebrow arithmetic of curves of genus
  {$2$}}, volume 230 of {\em London Mathematical Society Lecture Note Series}.
\newblock Cambridge University Press, Cambridge, 1996.

\bibitem[DN03]{Diem-Naumann:2003}
C.~Diem and N.~Naumann.
\newblock On the structure of {W}eil restrictions of abelian varieties.
\newblock {\em J. Ramanujan Math. Soc.}, 18(2):153--174, 2003.

\bibitem[GGR05]{Gonzalez-Guardia-Rotger:2005}
J.~Gonz{\'a}lez, J.~Gu{\`a}rdia, and V.~Rotger.
\newblock Abelian surfaces of {${\rm GL}_2$}-type as {J}acobians of curves.
\newblock {\em Acta Arith.}, 116(3):263--287, 2005.

\bibitem[Har94]{Harbater:1994}
D.~Harbater.
\newblock Galois groups with prescribed ramification.
\newblock In {\em Arithmetic geometry ({T}empe, {AZ}, 1993)}, volume 174 of
  {\em Contemp. Math.}, pages 35--60. Amer. Math. Soc., Providence, RI, 1994.

\bibitem[Iha02]{Ihara:2002}
Y.~Ihara.
\newblock Some arithmetic aspects of {G}alois actions in the pro-{$p$}
  fundamental group of {$\Bbb P^1-\{0,1,\infty\}$}.
\newblock In {\em Arithmetic fundamental groups and noncommutative algebra
  ({B}erkeley, {CA}, 1999)}, volume~70 of {\em Proc. Sympos. Pure Math.}, pages
  247--273. Amer. Math. Soc., Providence, RI, 2002.

\bibitem[Jon10]{Jones:2010}
J.~W. Jones.
\newblock Number fields unramified away from 2.
\newblock {\em J. Number Theory}, 130(6):1282--1291, 2010.

\bibitem[JR]{Jones-Roberts:DB}
J.~W. Jones and D.~P. Roberts.
\newblock A database of number fields.
\newblock arXiv:1404.0266 (math.NT).
\newblock Database available at http://hobbes.la.asu.edu/NFDB/.

\bibitem[PR07]{Papanikolas-Rasmussen:2005}
M.~Papanikolas and C.~Rasmussen.
\newblock On the torsion of {J}acobians of principal modular curves of level
  {$3^n$}.
\newblock {\em Arch. Math. (Basel)}, 88(1):19--28, 2007.

\bibitem[Ras04]{Rasmussen:2004}
C.~Rasmussen.
\newblock On the fields of 2-power torsion of certain elliptic curves.
\newblock {\em Math. Res. Lett.}, 11(4):529--538, 2004.

\bibitem[RT08]{Rasmussen-Tamagawa:2008}
C.~Rasmussen and A.~Tamagawa.
\newblock A finiteness conjecture on abelian varieties with constrained prime
  power torsion.
\newblock {\em Mathematical Research Letters}, 15(5--6):1223--1232, 2008.

\bibitem[RT12]{Rasmussen-Tamagawa:2012}
C.~Rasmussen and A.~Tamagawa.
\newblock Arithmetic of abelian varieties with constrained torsion.
\newblock 2012, arXiv:1302.1477 (math.NT).
\newblock (submitted).

\bibitem[Sha02]{Sharifi:2002}
R.~T. Sharifi.
\newblock Relationships between conjectures on the structure of pro-{$p$}
  {G}alois groups unramified outside {$p$}.
\newblock In {\em Arithmetic fundamental groups and noncommutative algebra
  ({B}erkeley, {CA}, 1999)}, volume~70 of {\em Proc. Sympos. Pure Math.}, pages
  275--284. Amer. Math. Soc., Providence, RI, 2002.

\bibitem[ST68]{Serre-Tate:1968}
J.-P. Serre and J.~Tate.
\newblock Good reduction of abelian varieties.
\newblock {\em Ann. of Math. (2)}, 88:492--517, 1968.

\end{thebibliography}

\end{CJK}
\end{document}